\documentclass[review,16pt]{elsarticle}
\usepackage[utf8]{inputenc}
 \usepackage{amsmath}
 \usepackage{amssymb}
 \usepackage{amsthm}
\usepackage{lineno,hyperref}
\DeclareUnicodeCharacter{2212}{-}

 \newtheorem{theorem}{Theorem}[section]
\newtheorem{lemma}[theorem]{Lemma}

\theoremstyle{definition}
\newtheorem{definition}[theorem]{Definition}
\newtheorem{example}[theorem]{Example}

\newtheorem{proposition}[theorem]{Proposition}
\theoremstyle{remark}
\newtheorem{remark}[theorem]{Remark}
\newtheorem{corollary}[theorem]{Corollary}










\bibliographystyle{elsarticle-num}

\begin{document}

\begin{frontmatter}

\title{Non-stationary $\phi$-contractions and associated fractals}

\author[mymainaddress]{Amit}

\ead{amit60363@gmail.com}

\author[mymainaddress]{Vineeta Basotia}
\ead{vm.jjt@gmail.com}

\author[secondarymainaddress]{Ajay Prajapati}
\ead{ajaypraja640@gmail.com}

\address[mymainaddress]{Department of Mathematics, JJT University, Jhunjhunu, Rajasthan,  India, 333001}

\address[secondarymainaddress]{Department of Computer Science, Banaras Hindu University, Varansi, India, 221005}

\begin{abstract}
In this study we provide several  significant generalisations of Banach contraction principle where the Lipschitz constant is substituted by real-valued control function that is a comparison function. We study non-stationary variants of fixed-point. In particular, this article looks into “trajectories of maps defined by function systems” which are regarded as generalizations of traditional iterated function system. The importance of forward and backward trajectories of general sequences of mappings is analyzed. The convergence characteristics of these trajectories determined a non-stationary variant of the traditional fixed-point theory. Unlike the normal fractals which have self-similarity at various scales, the attractors of these  trajectories of maps which defined by function  
systems that may have various structures at various scales. In this literature we also study the sequence of countable IFS having some generalized contractions on a complete metric space
 
\end{abstract}

\begin{keyword}
\texttt{Forward Trajectories \sep Backward Trajectories \sep Comparison Function \sep Iterated Function Systems \sep Fractal interpolation}
\end{keyword}

\end{frontmatter}


\section{Introduction}
The most renowned contribution in the study of fixed points theory is the Banach contraction principle.
In \cite{SR1} we see some valuable  generalizations of Banach contraction principle where the Lipschitz's constant is shifted by some control function with real-values. In complete metric space, Boyd and Wong \cite{PV1} revealed the existence of fixed points of mappings and also noted the Banach contraction principle that have some of generalized contractions condition.
Dyn et al. \cite{DLM,DLM2} studied the non-stationary fractals. It should also be noted that Graf \cite{Graf} had already studied this problem under the title of statistically self-similar sets.

The purpose of this article is to getting the  theorems of fixed point by generalizing the contractions or also introducing  the trajectories which induced with sequence of many countable transformations or also establish its elementary properties (see \cite{DLM2}). The concept of fractal interpolation function(FIF) was introduced by Barnsley\cite{MB} through iterated function system (IFS). An IFS have contractions mapping that belongs to finite family over some complete metric space $X\to X$. An IFS induces an operator \cite{Secelean} on $K(X)$(collection of all nonempty compact subsets of $X$) into itself which is a contraction on this complete metric space hence, according to Banach’s Contraction Principle, has a unique set “fixed-point” ( also called attractor of the respective IFS) which is, generally, a fractal \cite{sec}. Generalized countable IFS (GCIFS) are expansion of countable IFS rather to define contraction on complete metric space $X$, contraction define on $X\times X \to X$ (see \cite{sec22}).\cite{storbin} examined that there exist a subset of plane which is an attractor of generalized IFS, however its not an attractor of IFS also find out an examples of  Cantors set that is not an attractor of  generalized IFS.
\par 
We first explain some fundamental results before establishing the fixed point theorems of any generalised contraction. Fractal dimension is an integral part of fractal geometry. Many recent works on fractal dimensions can be seen in the literature. 
Liang \cite{Liang}  confirmed that any function which is continuous and of bounded variation on $[0,1]$ having one box dimension. He also confirmed that the Riemann Liouville fractional integral of any continuous function which is defined on bounded variation on $[0,1]$ have one box dimension. The effect of the Riemann-Liouville fractional integral operator on unbounded variation points of a continuous function can be seen \cite{VL1}. When we move from unit interval to rectangular domain, we have different ideas of bounded variations such as Arzel\'{a}, Toneli, Hahn and Peirpont, for instance, see \cite{JA}. By using the concept of bounded variation with respect to Arzel\'{a},  Verma and Viswanathan gave results on the fractal dimension of the graph of the mixed Riemann-Liouville fractional integrals over rectangular domain in \cite{sverma3}. In this order, Chandra and Abbas \cite{SS2} estimated the fractal dimension of the graph of the mixed Riemann-Liouville fractional integrals of different choice of  continuous functions over rectangular domain.   They also estimated the box dimension of the graph of the mixed Katugampola fractional integrals of a two dimensional continuous functions in \cite{SS3}. The fractal dimensional results of the mixed Weyl-Marchaud fractional derivative can be seen in \cite{SS4}.  Chandra and Abbas\cite{SS1} examined the partial integrals as well as  partial derivatives about the bivariate fractal interpolation functions (FIFs) and also study  the integral transforms and fractional order of integral transforms of bivariate FIFs. Gowrisankar and Uthayakumar \cite{Gowri} studied about the presence of fractal interpolation function (FIF) for the data  sequence given as   $\{(a_n,b_n):n\geq 2\}$ having countable iterated function system where the sequence  $a_n$ is bounded as well as monotone and sequence $b_n$ is bounded, also the order integral of FIF for sequence data is evaluate if the values at initial endpoint and final endpoint of the integral is known.\\
Jha and Verma\cite{Jha} provided  a hard study of fractal dimension of  $\alpha-$fractal functions on various function spaces. Jha \cite{JVC1} shown the method proposed generalize the existing stationary interpolant in terms of IFS and also worked on the proposed interpolant's basic properties and figured out its box dimension as well as Hausdorff dimension. Sahu and Priyadrshi estimated the box dimension of Graph of harmonic function on Sierpi\'nski  gasket in \cite{SP}. Verma and Sahu \cite{VFS} introduced the  notion of bounded variation on  Sierpi\'nski  gasket. Also, they have estimated the fractal dimension of a continuous function which is  of bounded vartion on  Sierpi\'nski  gasket. In this order, Agrawal and Som  \cite{Vishal} have presented the concept of dimension preserving approximation for the bivariate continuous function on a rectangular domain. Agrawal and Som \cite{EPJST} have estimated fractal dimensions of $\alpha$-fractal functions on the Sierpi\'nski Gasket. Further, the same authors \cite{RIM} have investigated the approximation of functions by fractal functions corresponding to the $L^p$-norm on the Sierpi\'nski gasket. New idea for construction of FIF by using  Rakotch and Matkowski fixed point theorems is given in \cite{SR2}. Verma et al. \cite{Ver1} initiated theory on fractal dimension of vector valued function.
\par
Mihail \cite{redu} proved the existence of an analogue of Hutchinson measure associated with a GIFS with probabilities and present some of its properties. Pandey et al.  \cite{PSV1} developed some theory about the dimension of vector valued function graph also figured out the fractal dimension of the graphs of this Katugampola fractional integral about  vector-valued continuous functions having bounded variation which defined on closed and bounded interval of $\mathbb{R}$. Verma \cite{VM1} investigated approximation issues in relation to the fractal dimensions of function as well as their derivative.
\par
The organization of this paper is as follows. In Section 2, we discuss about basics definition and examples relates to our result that we generalize in this paper. In Section 3, we discuss about some contractive condition and its example and properties related to contraction i.e convergence of forward as well as backward trajectories. Further, Section 4 is about the trajectories of sequence of function systems (SFSs) asymptotic similarity of SFS trajectories and convergence of forward and backward SFS trajectories and some result based on $\phi-$ contraction. In Section 5, we discuss about the countable iterated function system consisting of $\phi-$ contractions.

\section{Preliminary facts}
We will revisit several well-known principles of fixed point theory in this part, that will be included in the follow.
\begin{definition}\label{con}(Contraction mapping)
Assume $X$ is a metric space with  metric $d$. A function $f : X \to X$ is Known as Lipschitz continuous if $\exists ~\lambda \geq 0$ in the sense that
$$d(f(a), f(b)) \leq \lambda d(a,b), ~\forall~ a, b \in X.$$
The Lipschitz constant of $f$ is the lowest $\lambda$ for which the preceding inequality holds. If $\lambda < 1$ is true, then $f$ is a contraction mapping.
\end{definition}
\begin{definition}\label{Banach}(Banach contraction principle) Consider the contraction mapping $f$ on complete metric space $X$ then there exist unique fixed point $\bar{x} \in X$ of $f$.
\end{definition}
\begin{example} Let $(\mathbb{R}, d)$ be a metric space,  where $d$ be a Euclidean distance metric that is, $d(a, b) =|a-b|$. The mapping $f:\mathbb{R}\to \mathbb{R}$  defined as $f(x) = \frac{x}{2}+k$, clearly it is a contraction mapping. Hence, definition \ref{Banach} yields $f$ has a unique fixed point and one can easily see that $2k$ is the unique fixed point of the defined function $f$.
\end{example}
\begin{definition}(Iterated function system {IFS})  Let $f=\{f_1,f_2,\dots ,f_n\}$ is a collection of $n$ contraction
mappings defined on a closed and bounded subset $D \subset \mathbb{R}^n$  that is,  for each $k \in \{1, 2,\dots  n\}, ~ f_{k}: D\to D$. Thus, using the definition (\ref{con})
there exists a constant  $0 \leq \lambda_{k}< 1 $ such that 
$d(f_{k}(a), f_{k}(b)) \leq \lambda_{k} d(a, b)~~ \forall ~~a, b \in D$. Hence, we get an IFS $\{D; f_{i}, i=1, \ldots, k\} $. Let $F: \mathcal{C}(D) \to \mathcal{C}(D)$ be a map on the collection of all the compact subsets of $D$, which is denoted by $\mathcal{C}(D)$.
 For any subset $A\subset D$, $F$ is defined by
$$ F(A)={\bigcup_{i=1}^{n}f_{i}(A)}.$$ The map $F$ follows the definition \ref{Banach} if it is endowed with Hausdorff metric, therefore $F$ has a unique set, say  $A^{*} \subset D$ which is the “fixed point” of $F$ i.e
$$F(A^{*})= A^{*}.$$
Thus, this $A^{*}$ is an attractor of IFS.\\
For example consider following two maps on $[0,1]$:
$f_1(x) =\frac{x}{3}$ , $f_2(x) = \frac{x}{3}+\frac{2}{3}$
clearly we can see Cantor set $C \subset [0, 1]$ is the attractor/fixed point i.e  $C=f_1(C)\cup f_2(C)$ 
\end{definition}
\begin{definition}
Consider $(X, d)$ be a metric space and $P(X)$ denotes the class of all nonempty subsets of $X$ , $K(X)$ denotes  the collection of all nonempty compact subsets of $X$. The function $h : P(X)\times P(X)\to \mathbb{R_+} \cup \{\infty\}$ , $$h(A, B) = \max\{d(A, B), d(B, A)\} = \max\left \{\max_{x\in A}\min_{y\in B}d(x, y), \max_{y\in B}\min_{x\in A}
d(x, y)\right \},$$ 
$\forall A ,B\in P(X)$ is called  Hausdorff semi–metric. When we use $K(X)$ instead of $P(X)$, the previously described mapping $h$ is called Hausdorff metric.

\end{definition}
\begin{lemma}
If $(X, d)$ is complete and compact, then the metric space $(K(X),h)$ is complete and compact respectively.
\end{lemma}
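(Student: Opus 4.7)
The plan is to prove the two assertions separately, since both are classical results about the hyperspace $(K(X),h)$.

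For completeness, I would take a Cauchy sequence $(A_n)\subset K(X)$ and construct an explicit candidate limit, namely
\[
A \;=\; \Bigl\{\, x\in X \;:\; \exists\, x_{n_k}\in A_{n_k} \text{ with } x_{n_k}\to x \Bigr\},
\]
equivalently $A=\bigcap_{n\ge 1}\overline{\bigcup_{m\ge n}A_m}$. First I would show $A$ is nonempty: pick a subsequence so that $h(A_{n_k},A_{n_{k+1}})<2^{-k}$ and inductively choose $x_{n_k}\in A_{n_k}$ with $d(x_{n_k},x_{n_{k+1}})<2^{-k}$; the completeness of $X$ then gives a limit point in $A$. Second, $A$ is closed by its intersection description, so to get $A\in K(X)$ I would prove $A$ is totally bounded: given $\varepsilon>0$, choose $N$ with $h(A_n,A_N)<\varepsilon/2$ for $n\ge N$, cover $A_N$ by finitely many $(\varepsilon/2)$-balls, and argue these balls cover $A$ too (the construction of elements of $A$ as limits of chosen representatives lets each $x\in A$ be approximated from $A_N$).

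Third and most delicate, I would verify $A_n\to A$ in $h$. This splits into the two one-sided estimates $d(A_n,A)\to 0$ and $d(A,A_n)\to 0$. For $d(A_n,A)\to 0$: given $\varepsilon>0$, pick $N$ with $h(A_m,A_n)<\varepsilon$ for $m,n\ge N$; for any $a\in A_n$ iteratively build a sequence $a=a_n,a_{n+1},a_{n+2},\ldots$ with $a_k\in A_k$ and $d(a_k,a_{k+1})$ controlled, producing a limit point in $A$ within $\varepsilon$ of $a$. For $d(A,A_n)\to 0$: every point of $A$ is by definition a limit of points from $A_{n_k}$, and the Cauchy estimate transfers this into an $\varepsilon$-approximation from $A_n$.

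For compactness I would appeal to the standard characterization that a metric space is compact iff it is complete and totally bounded. Completeness is the assertion just proved (using that compact metric spaces are complete). For total boundedness of $(K(X),h)$, fix $\varepsilon>0$; since $X$ is compact there is a finite $\varepsilon$-net $\{x_1,\dots,x_N\}\subset X$. I would then show that the finite collection
\[
\mathcal F_\varepsilon \;=\; \bigl\{\, S \;:\; \emptyset\neq S\subseteq \{x_1,\dots,x_N\} \bigr\}\subset K(X)
\]
is an $\varepsilon$-net for $K(X)$: for any $A\in K(X)$, let $S=\{x_i : B(x_i,\varepsilon)\cap A\neq\emptyset\}$; checking both directions of the Hausdorff distance gives $h(A,S)<\varepsilon$.

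The main obstacle will be the third step of the completeness argument, namely showing $h(A_n,A)\to 0$: the inductive construction of a sequence $a_k\in A_k$ approaching $a\in A_n$ requires carefully summing geometric bounds, and one must confirm that the limit really lies in $A$ (not merely in $X$). Everything else is bookkeeping once the candidate $A$ and the $2^{-k}$ subsequence trick are in place.
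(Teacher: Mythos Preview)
Your proposal is a correct and standard proof of this classical fact about the hyperspace of compact sets. However, there is nothing to compare it against: in the paper this lemma appears in the preliminaries section and is stated without proof, as a known result quoted for later use. The paper immediately moves on to examples after stating the lemma.

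Since the paper offers no argument of its own, your write-up cannot match or differ from it; it simply supplies a proof where the paper gives none. The approach you outline (constructing the limit set $A=\bigcap_{n\ge 1}\overline{\bigcup_{m\ge n}A_m}$, the $2^{-k}$ subsequence trick for nonemptiness, total boundedness to get compactness of $A$, the two one-sided Hausdorff estimates, and then the finite $\varepsilon$-net argument for compactness of $K(X)$) is exactly the classical one found in standard references such as Barnsley's \emph{Fractals Everywhere} or any text on hyperspaces, and the steps you flag as delicate are indeed the ones requiring care but present no real obstruction.
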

\begin{example}
Let the metric space $\mathbb{R}$ having usual metric $d$ which induced the absolute value, that is 
$d(a,b)=|a-b|,~ a,b \in \mathbb{R} $ . Let $X =[0, 20]$ and  $Y =[22, 31]$. It's worth noting that for each $x \in X$, the closest point in $Y$ that gives the smallest distance will always
be $y = 22$. Therefore, we find that $d(X, Y) = \sup\{d(x, 22) : x \in X\}$. The point $x = 0$ in $X$ maximizes this distance. Therefore $$d(X, y) = d(0, 22) = |22 - 0| = 22.$$
Similarly, we find that $d(Y, X) = \sup\{d(y, 20) : y \in Y\}$, since the point $x = 20$
will give the smallest distance to any point in $Y$. The point $y = 31$ in $Y$ maximizes
this distance, so we have $$d(Y, X) = d(20, 31) = |20-31| = 11.$$  It follows that
$$h(X, Y) = \max\{d_2(X, Y), d_2(Y, X)\} = 22.$$
Note that $d(Y, X)$ and $d(X, Y)$ are not always equal.

\end{example}

\begin{example}Let the metric space $\mathbb{R}$  having usual metric $d$ which induced the absolute value, that is $d(a,b)=|a-b|,~ a,b \in \mathbb{R}$ . Consider 
$A = (0,1]~~ {\mbox{and}}~~ B =[-1,0)$.
Similarly as in previous example we get
$$\displaystyle h(A,B)=1 .$$
\end{example}

\section{Sequences of transformations and trajectories}
For basic terminologies and definitions used in this section, we refer the reader to \cite{DLM,Jha2,JVC1}.
Consider a complete metric space $(X, d)$. Let a sequence of continuous 
transformations $\{T_{i}:X\to X\mid i\in \mathbb{N}\}$.
\begin{definition}(Forward and backward procedures)
Consider $\{T_{i}:X\to X\mid i\in \mathbb{N}\}$ be the sequence of maps, forward and backward procedures are defined as
\begin{itemize}
\item 	$\Phi =T_k\circ T_{k-1}\circ \dots \circ T_{1},$
\item $\Psi= T_1\circ T_2\circ \dots \circ T_k.$
\end{itemize}
\end{definition}
\begin{definition}(Forward and backward trajectories)
We define forward and backward trajectories in $X$ as a result of the forward and backward processes, beginning with $x\in X, \{\Phi_{k}(x)\}$ and $\{\Psi_{k}(x)\}$ are given as
$$\Phi_{k}(x)=T_{k}\circ T_{k-1}\circ \dots \circ T_{1}(x) = T_{k}(x)\circ \Phi_{k-1}(x)  ,k \in \mathbb{N},$$                     
$$\Psi_{k}(x)= T_{1}\circ T_{2}\circ \dots \circ T_{k}(x)= T_{1}(x)\circ \Psi_{k-1}(x)  ,k\in \mathbb{N}.$$
\end{definition}

We will look at the convergence of both forward and backward trajectories in this section. Let us begin by defining the following term before stating our next argument.
\begin{definition} (Asymptotic similarity) Consider $(X,d)$ be a metric space then  
two sequences $\{x_{i}\}_{i\in N}$, $\{y_{i}\}_{i\in N}$ are claimed to be asymptotically similar if $$d(x_{i},y_{i}) \to 0~~ as~~i\to \infty.$$ This relation is denoted by
$$\{x_{i}\}\sim\{y_{i}\}.$$
\end{definition}

\subsection{Some generalized contractions}
Let $(X, d)$ be a complete metric space, now consider $f : X \to X$. We review some contractive-type conditions here. We say that  $\phi: [0,\infty)\to [0,\infty)$ is a comparison function if it is increasing and $\phi^{p}(t) \to 0$ as $p \to \infty$, for every $t \geq 0$,
where $\phi^{p} = \phi^{p}\circ\phi^{p-1}$,  which denotes p-times compositions of function  $\phi$ . One can deduce that, if $\phi$ is a comparison function, then $\phi(t) < t$
for all t $>$ 0, $\phi$ is right continuous at 0 and $\phi(0) = 0.$
$$d(f(x_{1}), f(x_{2})) \leq \phi(d(x_{1}, x_{2})), ~~\forall x_{1}, x_{2} \in X.$$
This mapping $f$ is called  $\phi$ contraction mapping. Then mapping $f$ has  unique fixed point in $X$ i.e $\bar{x} \in X$. Furthermore, for any $x_{0}\in  X$ then the sequence $f_n(x_0)$ always converges to $\bar{x}$.

Obviously, it is a specific case of this outcome, for $\phi(t)= rt,$ where $r\in(0,1)$ of Banach contraction principle,  $\phi(t)= \frac{t}{t+3}$  and $\phi(t) = \ln(t +2)$ are examples of comparison functions. Throughout this paper the notation  $\phi$ is reserved to denote a comparison function.
\begin{definition} If we have a self-map $f$ over a metric space $(X, d)$ for some function $\phi: [0,\infty)\to [0,\infty)$ and we have , 
$$d(f(x_{1}), f(x_{2})) \leq \phi(d(x_{1}, x_{2})), ~~\forall x_{1}, x_{2} \in X.$$
then we say that $f$ is a $\phi$-contraction and  if $\phi$ is non-decreasing mapping  such that
$\phi^{p}(t) \to 0$ as $p \to \infty$, for all $t > 0$, then $f$ is called a Matkowski contraction and if $\alpha(t) =\frac{\phi(t)}{t}< 1$ for any $t > 0$, the function $\alpha$ is non-increasing then, at that point we consider such a mapping a Rakotch contraction .
\end{definition}
\begin{example} Consider $X = [0,\infty)$ and $d(x,y)=|x-y|$ . Let $f_1 : X \to [0, 1]$  and $f_2 : X \to [0, 1]$,   where $[0,1] \subset X$
and  defined $f_1(x) =\frac{1}{1+x}$ and $f_2(x) = \frac{x}{1+x}$. Then $$d(f_1(x),f_1(y))=\frac{|x-y|}{(x+1)(y+1)}\leq \frac{|x-y|}{(|x-y|+1)}=\alpha(d(x,y))d(x,y).$$
$$d(f_1(x),f_1(y)) \leq \alpha(d(x,y))d(x,y).$$
Similarly 
$$d(f_2(x),f_2(y)) \leq \alpha(d(x,y))d(x,y),$$
where $\alpha(t) =\frac{1}{1+t}$ where $t=d(x,y)\geq 0$. Obviously $f_1$ and $f_2$ are Rakotch contraction mappings on $X \subset \mathbb{R}$ over the usual metric but these are not Banach contraction maps.
\end{example}

\begin{proposition} (Asymptotic similarity of trajectories)
Consider sequence of transformations $\{T_{i}\}_{i\in \mathbb{N}}$ on metric space $X$, where every $\{T_{i}\}$ is a $\phi$ contraction mapping along with each comparison mapping $\phi_{i}$. If       
$$\lim_{k \to \infty } \phi_{1}\circ\phi_{2}\circ\phi_{3}\dots \circ\phi_{k} = 0.$$
 then for any $x, y \in X,$
$$\{\Phi_{k}(x)\}\sim \{\Phi_{k}(y)\},$$

$$\{\Psi_{k}(x)\} \sim \{\Psi_{k}(y)\}.$$
\end{proposition}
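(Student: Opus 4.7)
The plan is to exploit the $\phi_i$-contraction property of each $T_i$ together with the monotonicity of each comparison function in order to bound, by induction on $k$, the distance between two trajectories by an explicit composition of the $\phi_i$'s applied to $d(x,y)$, and then to invoke the standing hypothesis to conclude convergence to zero.

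For the backward trajectory, I would use the identity $\Psi_{k+1}(x) = \Psi_k(T_{k+1}(x))$ together with the $\phi_{k+1}$-contraction of $T_{k+1}$. The inductive hypothesis
$$d(\Psi_k(x), \Psi_k(y)) \leq (\phi_1 \circ \phi_2 \circ \cdots \circ \phi_k)\bigl(d(x,y)\bigr),$$
combined with the monotonicity of the outer composition $\phi_1 \circ \cdots \circ \phi_k$, upgrades the contraction of $T_{k+1}$ into
$$d(\Psi_{k+1}(x), \Psi_{k+1}(y)) \leq (\phi_1 \circ \cdots \circ \phi_k)\bigl(\phi_{k+1}(d(x,y))\bigr) = (\phi_1 \circ \cdots \circ \phi_{k+1})\bigl(d(x,y)\bigr).$$
Evaluating the convergence hypothesis at the fixed argument $d(x,y)$ then yields $d(\Psi_k(x), \Psi_k(y)) \to 0$, so $\{\Psi_k(x)\} \sim \{\Psi_k(y)\}$.

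For the forward trajectory the induction is symmetric: from $\Phi_{k+1}(x) = T_{k+1}(\Phi_k(x))$, the $\phi_{k+1}$-contraction of $T_{k+1}$, the inductive bound for $d(\Phi_k(x),\Phi_k(y))$, and the monotonicity of $\phi_{k+1}$, one obtains
$$d(\Phi_k(x), \Phi_k(y)) \leq (\phi_k \circ \phi_{k-1} \circ \cdots \circ \phi_1)\bigl(d(x,y)\bigr).$$
Reading the convergence hypothesis as covering either order of composition (as the authors appear to intend when they write a generic chain $\phi_1 \circ \phi_2 \circ \cdots \circ \phi_k$), the right-hand side tends to $0$, giving $\{\Phi_k(x)\} \sim \{\Phi_k(y)\}$.

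The point I expect to be the real obstacle is exactly this difference in the order of composition between the two bounds. The backward case uses the assumption verbatim, but the forward case produces the reverse chain, and without commutativity of the $\phi_i$ these two chains need not share the same limiting behaviour. Hence either the hypothesis must be read symmetrically, or a supplementary argument is required; a natural one is to observe that $(\phi_k \circ \cdots \circ \phi_1)(t)$ is non-increasing in $k$ (since each $\phi_i(s) \leq s$) and bounded below by $0$, hence convergent, and then to rule out a strictly positive limit by imposing additional uniformity on the $\phi_i$ (for instance a common dominating comparison function $\phi$, whose iterates $\phi^k(t) \to 0$ majorise the composition). All remaining steps — the induction producing the composition bound and the use of monotonicity of each $\phi_i$ — are routine.
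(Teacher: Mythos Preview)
Your approach is essentially the paper's: for the backward trajectory one unrolls the composition $\Psi_k = T_1\circ\cdots\circ T_k$ using the $\phi_i$-contractivity and monotonicity of each $\phi_i$ to obtain
\[
d(\Psi_k(x),\Psi_k(y)) \le (\phi_1\circ\phi_2\circ\cdots\circ\phi_k)\bigl(d(x,y)\bigr),
\]
and then invokes the hypothesis at the value $t=d(x,y)$. The paper writes this as a direct chain of inequalities rather than a formal induction, but the content is identical.

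Where you go beyond the paper is in your discussion of the forward case. The paper disposes of $\{\Phi_k\}$ in a single remark (``The proof is alike for the backward and the forward trajectories'') without commenting on the reversal of the composition order. Your observation that the forward bound naturally produces $(\phi_k\circ\cdots\circ\phi_1)(d(x,y))$ rather than $(\phi_1\circ\cdots\circ\phi_k)(d(x,y))$, and that these need not coincide or share the same limit absent commutativity or a common dominating comparison function, is a genuine point the paper does not address. So your proof is at least as complete as the paper's; the caveat you raise is real, and the paper simply reads the hypothesis symmetrically without saying so.
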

\begin{proof} Consider $x, y \in X$ and let the trajectories $\{\Psi_{k}(x)\}$ and $\{\Psi_{k}(y)\}$. Utilizing the way that $\{T_{i}\}$ is
a $\phi$ contraction mapping with each has  comparison mapping $\phi_{i}$.
    \begin{align*}
    d(\Psi_{k}(x), \Psi_{k}(y)) & \leq \phi_{1}(d({T_{2}\circ T_{3}\circ \dots \circ T_{k}(x)},{T_{2}\circ T_{3}\circ \dots \circ T_{k}(y)} ) \\ &
\leq \phi_{1}\circ\phi_{2}(d({T_{3}\circ T_{4}\circ \dots  \circ T_{k}(x)},{T_{3}\circ T_{4}\circ \dots  \circ T_{k}(y)} )\\ & 
 \leq \phi_{1}\circ\phi_{2}\circ\phi_{3}(d({T_{4}\circ \dots \circ T_{k}(x)},{T_{4}\circ  \dots \circ T_{k}(y)})\\ &
 \leq \phi_{1}\circ\phi_{2}\circ\phi_{3}\dots \circ\phi_{k}(d((x,y)) \to 0  ~ ~ ~ ~  \text{as}~ k \to \infty.
    \end{align*}

 By using property of $\phi$ mapping that is, $\lim_{k \to \infty } \phi_{1}\circ\phi_{2}\circ\phi_{3} \dots \circ\phi_{k} = 0.$
 \end{proof}
 \begin{remark}
 The proof is alike for the backward and the forward trajectories.
 \end{remark}
 \begin{remark} The condition $\lim_{k \to \infty} \phi_{1}\circ\phi_{2}\circ\phi_{3} \dots \circ\phi_{k} = 0$ condition expressed in proposition 3.6 doesn't guarantee convergence of the trajectories $\{\Phi_{k}(x)\}$.
 \end{remark}
If $T_{i} =T ~ \forall ~ i \in \mathbb{N}$, and $T$ is
a $\phi$ contraction mapping with comparison mapping $\phi$ such that $\phi(t) <$ t for all t $>$ 0 then, at that point both kind  of trajectories are only the fixed-point iteration trajectories ${T^k(x)}$, where $T^k$ is the k-times composition of $T$ itself and  $T^k$ converge to  unique limit for taking any starting point $x\in X$. Which is also known from the principle of Banach contraction (see \cite{PV1})  that ${T^k(x)}$ converges to  unique limit for any $x\in X$. The query now emerges with regard to the convergence of
general trajectories that is, which conditions ensure the convergence of the backward and the forward trajectories.\\
\begin{definition}(Invariant domain)          Any subset of $X$ i.e consider $C \subseteq X$ is called an invariant domain of sequence of  transformations
$\{T_{i}\}_{i\in N}$ if
$\text{take any}~~ x \in C,\text{then}~ T_{i}(x) \in C$, $\forall~ i \in \mathbb{N}$.
\end{definition}
\begin{proposition}(Convergence of forward trajectories)
Consider $\{T_{i}\}_{i\in \mathbb{N}}$ be a sequence of transformations defined on $X$, also having compact
invariant domain $C$, and consider $\{T_{i}\}_{i\in \mathbb{N}}$ converges uniformly  to a
map  $T$  on $C$, where $T$ is a $\phi$ contraction mapping with comparison mapping $\phi$. Then, for taking any 
$x \in C$, the
trajectory $\{\Phi_{i}(x)\}_{i\in \mathbb{N}}$ will  converges to  fixed-point $p$ of $T$ i.e,

$$\lim_{k \to \infty } d(\Phi_{k}(x),p) = 0.$$
\end{proposition}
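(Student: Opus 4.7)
The plan is to reduce the convergence of the forward trajectory to two simpler convergence facts: that long iterates of $T$ collapse $C$ onto the fixed point $p$, and that, for each fixed $m$, the block $T_{k+m}\circ\cdots\circ T_{k+1}$ approximates $T^m$ uniformly when $k$ is large. I then combine these by inserting an $m$-long tail at time $k$ and sending $k\to\infty$ before sending $m\to\infty$.

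First I would check that $T$ admits a unique fixed point in $C$. Since each $T_i$ sends $C$ into $C$ and $T_i\to T$ uniformly on $C$, the limit $T$ also satisfies $T(C)\subseteq C$; and $C$ is complete as a closed subset of $X$. The $\phi$-contraction fixed-point result recalled in Section~3.1 then produces a unique $p\in C$ with $T(p)=p$ and gives $T^m(y)\to p$ for every $y\in C$. I would strengthen this to uniform convergence on $C$ by iterating the $\phi$-contraction inequality:
\[
d(T^m(y),p)\;\leq\;\phi^m(d(y,p))\;\leq\;\phi^m(D),\qquad D:=\mathrm{diam}(C),
\]
so that $\phi^m(D)\to 0$ delivers uniform convergence $T^m\to p$ on $C$.

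For the second ingredient, set $\varepsilon_k:=\sup_{y\in C} d(T_k(y),T(y))$, which by hypothesis tends to $0$. For fixed $m\geq 1$ put $F_{k,m}:=T_{k+m}\circ T_{k+m-1}\circ\cdots\circ T_{k+1}$; I would prove by induction on $m$ that $F_{k,m}\to T^m$ uniformly on $C$ as $k\to\infty$. The base case $m=1$ is the hypothesis. For the inductive step, the triangle inequality followed by the $\phi$-contraction of $T$ gives, for $y\in C$,
\[
d\bigl(F_{k,m}(y),T^m(y)\bigr)\;\leq\;\varepsilon_{k+m}+\phi\Bigl(\sup_{z\in C} d(F_{k,m-1}(z),T^{m-1}(z))\Bigr).
\]
By induction the inner supremum tends to $0$, and since $\phi$ is right-continuous at $0$ with $\phi(0)=0$, the whole right-hand side tends to $0$, closing the induction.

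Finally, given $\varepsilon>0$ I would pick $m$ so that $d(T^m(y),p)<\varepsilon/2$ for all $y\in C$, and then $K$ so that $d(F_{k,m}(y),T^m(y))<\varepsilon/2$ for all $y\in C$ and all $k\geq K$. Invariance of $C$ gives $\Phi_k(x)\in C$, so applying these bounds with $y=\Phi_k(x)$ yields $d(\Phi_{k+m}(x),p)<\varepsilon$ for every $k\geq K$, proving $\Phi_k(x)\to p$. The main obstacle is the inductive step of the block approximation: compositions of uniformly convergent maps on a compact set converge uniformly only if one has a modulus-of-continuity estimate for the limit map, and here the only such tool is the $\phi$-contraction bound for $T$, so the argument leans crucially on right-continuity of $\phi$ at $0$. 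Working directly with the one-step recursion $d(\Phi_k(x),p)\leq\varepsilon_k+\phi(d(\Phi_{k-1}(x),p))$ is tempting but awkward without extra regularity on $\phi$, which is why I prefer to decouple the two error sources via the $F_{k,m}$ blocks.
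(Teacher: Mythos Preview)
Your proof is correct and follows essentially the same route as the paper: split $d(\Phi_{k+m}(x),p)$ through the intermediate point $T^m(\Phi_k(x))$, make the second term small uniformly over $C$ by choosing $m$ large, and then make the block-approximation term $d(F_{k,m}(\Phi_k(x)),T^m(\Phi_k(x)))$ small by choosing $k$ large for that fixed $m$. The only cosmetic difference is that the paper bounds this block term by the explicit telescoping sum $\varepsilon_{k+m}+\phi(\varepsilon_{k+m-1})+\cdots+\phi^{m-1}(\varepsilon_{k+1})$ instead of your induction, and your appeal to right-continuity of $\phi$ at $0$ is in fact more careful than the paper's $\tfrac{1}{1-\phi}$ shorthand.
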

\begin{proof} Denoting  $~\epsilon_{i} = \sup_{x\in C} d(T_{i}(x), T(x)), i \in \mathbb{N}$, it follows that 

$$\lim_{k \to \infty } \epsilon_{i} = 0.$$
As $T$ is  $\phi$ contraction mapping with comparison mapping $\phi$ such that $\phi(t) < t$ for all $t > 0$, the fixed-point iterations $T^k(x)$ converge to $p\in X$ which is unique fixed-point  for taking any initial element $x$. It additionally obey that $C$ is an invariant domain of $T$. Beginning with taking any point $x \in C$, which gives  that $\Phi_k(x) \subseteq C$. Utilize the triangle inequality of metric space $(X, d)$ and  $\phi$ contraction property of $T$, we get
    \begin{align*}
    & d(\Phi_{k+m}(x),T^m \Phi_{k}(x))  \leq (d({T_{k+m}\circ T_{k+m-1}\circ \dots \circ T_{k+1}\circ \Phi_{k}(x)},T^m \Phi_{k}(x))\\&
    \leq (d({T_{k+m}\circ T_{k+m-1}\circ \dots \circ T_{k+1}\circ \Phi_{k}(x)}, T \circ T_{k+m-1}\circ \dots \circ T_{k+1}\circ \Phi_{k}(x))\\&+ (d( T \circ T_{k+m-1}\circ \dots \circ T_{k+1}\circ \Phi_{k}(x), T^2 \circ T_{k+m-2}\circ \dots \circ T_{k+1}\circ \Phi_{k}(x))\\&+ (d(T^2 \circ T_{k+m-2}\circ \dots \circ T_{k+1}\circ \Phi_{k}(x), T^3 \circ T_{k+m-3}\circ \dots \circ T_{k+1}\circ \Phi_{k}(x))\\&+\dots+(d(T^{m-1} \circ T_{k+1}\circ \Phi_{k}(x), T^m \Phi_{k}(x))\\&
    \leq  \epsilon_{k+m} + \phi \epsilon_{k+m-1} +\phi^2 \epsilon_{k+m-2} + \dots + \phi^{m-1} \epsilon_{k+1}\\&
    \leq  \max_{1\leq i \leq m} \{\epsilon_{k+i}\} \times \frac{1}{1-\phi}.
 \end{align*}

Now utilize the relation  
$$d(\Phi_{k+m}(x),p) \leq d(\Phi_{k+m}(x),T^m \Phi_{k}(x)) +d(T^m \Phi_{k}(x),p),$$ 

the outcome follows by seeing that for $k$ large enough $ \max_{1\leq i \leq m} \{\epsilon_{k+i}\}$ can constructed
 as small as we need and for such $k$, for its sufficient large value , 
$d(T^m \Phi_{k}(x),p)$ is as small as required.
\end{proof}
\begin{proposition} \label{CBT}(Convergence of backward trajectories)
Consider $\{T_{i}\}_{i\in N}$ be a sequence of transformations defined on $X$, also $d(T(x_0),x_0)$ is bounded for some $x_0 \in X$, and assume $T_{i}$ is a $\phi$- contraction mapping with comparison mapping $\phi$ such that $\phi(t) < t$ for all $t > 0$, $$\sum_{k=1}^{\infty} \phi_{1}\circ\phi_{2}\circ\phi_{3} \dots \circ\phi_{k}(x) < \infty,$$ then the backward trajectories $\Psi_{k}(x)= T_{1}\circ T_{2}\circ \dots \circ T_{k}(x) ,k \in N$,
converge for taking any initial element  $x \in X$ to a unique limit  point in $X$.
\end{proposition}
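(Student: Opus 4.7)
The plan is to show that for each $x \in X$ the backward trajectory $\{\Psi_k(x)\}$ is Cauchy, then invoke completeness of $(X,d)$ to get a limit, and finally use the asymptotic-similarity proposition (Proposition 3.6) to conclude that this limit is the same for every starting point. The backbone is a one-step estimate obtained by peeling off the $T_i$ from the outside inward, applying the $\phi_i$-contraction property at each step:
\begin{align*}
d(\Psi_{k+1}(x), \Psi_k(x)) &= d\bigl(T_1 \circ \cdots \circ T_{k+1}(x),\, T_1 \circ \cdots \circ T_k(x)\bigr) \\
&\leq (\phi_1 \circ \phi_2 \circ \cdots \circ \phi_k)\bigl(d(T_{k+1}(x), x)\bigr).
\end{align*}

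To make the right-hand side summable, I need a bound on $d(T_{k+1}(x), x)$ that is uniform in $k$. This is where the hypothesis that $d(T_i(x_0), x_0)$ is bounded, say by $M$, comes in. Using the triangle inequality together with the contractivity $d(T_{k+1}(x), T_{k+1}(x_0)) \leq \phi_{k+1}(d(x, x_0)) \leq d(x, x_0)$, one gets
$$d(T_{k+1}(x), x) \leq d(T_{k+1}(x), T_{k+1}(x_0)) + d(T_{k+1}(x_0), x_0) + d(x_0, x) \leq 2\, d(x, x_0) + M =: C_x,$$
a constant depending only on $x$. Monotonicity of each $\phi_i$ propagates through the composition, yielding $d(\Psi_{k+1}(x), \Psi_k(x)) \leq (\phi_1 \circ \cdots \circ \phi_k)(C_x)$. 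Summing consecutive differences and using the triangle inequality,
$$d(\Psi_{k+m}(x), \Psi_k(x)) \leq \sum_{j=k}^{k+m-1} (\phi_1 \circ \cdots \circ \phi_j)(C_x),$$
and the tail on the right tends to $0$ as $k \to \infty$ uniformly in $m$ by the summability assumption. Hence $\{\Psi_k(x)\}$ is Cauchy and converges to some $\bar x \in X$ by completeness. Finite summability also forces $\phi_1 \circ \cdots \circ \phi_k \to 0$, so Proposition 3.6 gives $d(\Psi_k(x), \Psi_k(y)) \to 0$ for every $y \in X$; thus the limit does not depend on the starting point and is therefore unique.

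I expect the main obstacle to be carrying out the boundedness step cleanly: the hypothesis supplies control of $d(T_i(x_0), x_0)$ only at the single anchor $x_0$, and the whole Cauchy argument collapses if the inner distance $d(T_{k+1}(x), x)$ is allowed to grow with $k$. The key is to use the property $\phi_{k+1}(t) \leq t$ of a comparison function to transport the single-point bound to arbitrary $x$, thereby obtaining the $k$-independent constant $C_x$. Once that is in hand, the rest is a routine Banach-style Cauchy estimate driven by the convergent series $\sum (\phi_1 \circ \cdots \circ \phi_k)(C_x)$.
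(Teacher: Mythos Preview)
Your proof is correct and follows essentially the same approach as the paper's: the same one-step peeling estimate $d(\Psi_{k+1},\Psi_k)\le(\phi_1\circ\cdots\circ\phi_k)(\cdot)$, telescoping via the triangle inequality to get a Cauchy sequence, completeness for existence of the limit, and Proposition~3.6 (asymptotic similarity) for uniqueness. The only cosmetic difference is that the paper first establishes the Cauchy property at the anchor point $x_0$ (where the bound $M$ is given directly) and then transfers convergence to arbitrary $x$ via asymptotic similarity, whereas you transport the bound to arbitrary $x$ first (obtaining $C_x=2d(x,x_0)+M$) and run the Cauchy argument there.
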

\begin{proof}
 We have   $d(T(x_0),x_0)$ is bounded for some $x_0 \in X$,
that is  $$\sup_{1\leq i < \infty} d(T_{i}(x_0), x_0)) \leq M < \infty, $$
$d(T_{i}(x_0), x_0)) \leq M $  then
$\phi_{i}(d(T_{i}(x_0), x_0)) \leq \phi_{i}(M) $
because $\phi_{i}$ is increasing function. We know $\{\Psi_{k}(x)\}\sim \{\Psi_{k}(y)\}$ for any $x,y \in X$ that is we need to show  $\Psi_k(x_0)$ is convergent  then $\Psi_k(x)$ it is convergent for any $x \in X$  by asymptotic similarity.
    \begin{align*}
        d(\Psi_{k+1}(x_0),\Psi_{k}(x_0))& \leq d(\Psi_{k}(T_{k+1}(x_0), \Psi_{k}(x_0))\\&
\leq \phi_{1}\circ\phi_{2}\circ\phi_{3} \dots  \circ\phi_{k}(d(T_{k+1}(x_0), x_0)) \to 0 ~as~  k \to \infty.
    \end{align*}
For taking  $m,k \in \mathbb{N}, m > k$, we get 
    \begin{align*}
& d(\Psi_{m}(x_0), \Psi_{k}(x_0)) \\&  \leq d(\Psi_{m}(x_0),\Psi_{m-1}(x_0))+ \dots +d(\Psi_{k+2}(x_0), \Psi_{k+1}(x_0))+d(\Psi_{k+1}(x_0),\Psi_{k}(x_0))\\
& \leq \phi_{1}\circ \dots \circ\phi_{m-1} d(T_{m}(x_0), x_0)+\dots+ \phi_{1}\circ\dots\circ\phi_{k+1} (d(T_{k+2}(x_0), x_0))\\& + \phi_{1}\circ\dots\circ\phi_{k} (d(T_{k+1}(x_0), x_0)\\
& \leq \phi_{1}\circ\dots\circ\phi_{m-1}(M)+\phi_{1}\circ\dots\circ\phi_{k+1}(M)+ \phi_{1}\circ\dots\circ\phi_{k}(M)\\&
=S_{m-1}(M)-S_{k}(M),
\end{align*}
where $S_k(t)=\sum_{k=1}^{\infty} \phi_{1}\circ\phi_{2}\circ\phi_{3}\dots\circ\phi_{k}(t) $  for any $t>0$.
Since $\sum_{k=1}^{\infty} \phi_{1}\circ\phi_{2}\circ\phi_{3}\dots\circ\phi_{k}(t) < \infty $ for any $t>0$, it follows that $S_{m-1}(M)-S_{k}(M) \to 0$. From this, we get  
$$d(\Psi_{m}(x_0), \Psi_{k}(x_0))  \to 0 ~\text{as}~  k \to \infty.$$
That is,
$\{\Psi_{k}(x_0)\}_{k\in \mathbb{N}}$  becomes a  Cauchy sequence and because of  completeness property of $(X, d)$, this is convergent
for some  $x_0 \in X$. The uniqueness of the limit point is attain by the equivalence property of all trajectories.
\end{proof}

\begin{remark}(Differences between forward and backward trajectories)
1) We observe that if $T_{i} \to T$ and $T$ is a $\phi$ contraction mapping with comparison mapping $\phi$ such that $\phi(t) < t$ for all $t> 0$,             $$\sum_{k=1}^{\infty}  \phi_{1}\circ\phi_{2}\circ\phi_{3}\dots \circ\phi_{k} <\infty$$ 
and both trajectories, the backward and the forward  trajectories converge.\\\\
2) The sufficient condition for the asymptotic similarity result for the both backward and forward trajectories is  $$\lim_{k \to \infty } \phi_{1}\circ\phi_{2}\circ\phi_{3}\dots \circ\phi_{k} = 0.$$ Under the more severe conditions 
$$\sum_{k=1}^{\infty} \phi_{1}\circ\phi_{2}\circ\phi_{3}\dots \circ\phi_{k} < \infty$$ and the existence of some $x_0\in X$ such that $d(T(x_0),x_0)$ is bounded then for the backward trajectories, we achieve convergence.\\\\
3) In numerous cases the forward trajectories don't converge while the backward trajectories converge. In order to explain this, consider the metric space $R$ with distance function define as $d(x, y) = |x - y|$ and let us consider the simple sequence of contractive transformations $\phi_{2i-1}(x)$ =$ x/2$, $\phi_{2i} = x/2 + c$, $i \geq 1$ . The backward trajectories converge to the fixed point of $\Phi_{1} = \phi_{1} \circ \phi_{2}$, which is $2c/3$. The forward trajectories have two accumulation points, which are the fixed point of $\Phi_{1}$ i.e., $2c/3$, and the fixed point of $\Psi_{2} = \phi_{2} \circ \phi_{1}$
which is $4c/3$.
\end{remark}
\section{Trajectories of sequences of function systems}
Classical generalisation of IFS, let us  consider a sequence of function systems (SFS), as well as its trajectories. Assume that $(X, d)$ be a complete metric space. Let an SFS $\{F_{i}\}_{i\in N}$ characterised by
$F_{i} = \{X; f_{1,i}, f_{2,i},\dots,f_{ni,i}\},$

where $f_{r,i} : X \to X$ all are continuous maps. The set-valued maps that go with it are given as
$F_{i} : H(X) \to H(X)$ and
$$ F_{i}(A)={\bigcup _{r=1}^{n_{i}}f_{r,i}(A)},$$ 
where denoting $f_{r,i} ~ \phi$-contraction  mapping with comparison mapping $\phi_{r,i}$ such that $\phi_{r,i}(t) < t$ for all $t > 0$ and for $r = 1, 2,\dots,n_{i}$. We recall that ,
the comparison mapping of $F_{i}$ in $(H(X),h)$ is  $max_{r=1,2,\dots,n_{i}} \phi_{r,i} = \phi_{i}$.
The attractor is addressed in classical IFS theory  apparently, the set that is a map $F_i$'s fixed-point. In this section, we assume the trajectories
of sequence of function system (SFS) maps $\{F_{i}\}_{i\in N}$, we call it forward and backward SFS trajectories\\
$$\Phi_{k}(A) ={F_{k}\circ F_{k-1}\circ \dots \circ F_{1}}(A) , k \in N,$$ 
$$\Psi_{k}(A)= {F_{1}\circ F_{2}\circ \dots \circ F_{k}}(A)  , k \in N$$  
respectively.
$H(X)$, possessing the Hausdorff metric $h$ and will be complete metric space if given that $(X, d)$ is complete metric space.\\\\
\begin{corollary}(Asymptotic similarity of SFS trajectories)
Let an SFS characterised by $F_{i} = \{X; f_{1,i}, f_{2,i},\dots,f_{ni,i}\}, i \in N$, here $f_{r,i} : X \to X$ are ${\phi}$-contraction mapping having comparison mapping $\phi_{r,i}$. Further, assume the set valued maps $F_{i}$ on $(H(X), h)$ with the corresponding comparison mapping $\phi_{i}$ satisfy for any $t>0,$
 $$\lim_{k \to \infty } \phi_{1}\circ\phi_{2}\circ\phi_{3}\dots\circ\phi_{k}(t) = 0.$$  As a result, all of $F_{i}$'s forward trajectories  are asymptotically similar and all of $F_{i}$'s backward trajectories are asymptotically similar that is 
 $\forall A ,B \in H(X)$
 $$\{\Phi_{k}(A)\}\sim \{\Phi_{k}(B)\},$$
$$\{\Psi_{k}(A)\}\sim \{\Psi_{k}(B)\}.$$
\end{corollary}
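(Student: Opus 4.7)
The plan is to reduce the corollary directly to Proposition~3.6, applied at the level of the hyperspace $(H(X), h)$ rather than $(X, d)$. Since $(X, d)$ is a complete metric space, so is $(H(X), h)$, so Proposition~3.6 is available in that setting. The whole task is therefore to recognise the set-valued maps $F_i$ as $\phi_i$-contractions on $(H(X), h)$ with the same comparison functions $\phi_i$ that appear in the hypothesis.

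First I would show that each $F_i : H(X) \to H(X)$ is a $\phi_i$-contraction with $\phi_i = \max_{1 \le r \le n_i} \phi_{r,i}$. For $A, B \in H(X)$, the standard Hausdorff-metric estimate for finite unions gives
$$h\!\left(\bigcup_{r=1}^{n_i} f_{r,i}(A),\, \bigcup_{r=1}^{n_i} f_{r,i}(B)\right) \le \max_{1 \le r \le n_i} h(f_{r,i}(A), f_{r,i}(B)).$$
Using the $\phi_{r,i}$-contraction property of $f_{r,i}$ together with monotonicity of $\phi_{r,i}$, one obtains $h(f_{r,i}(A), f_{r,i}(B)) \le \phi_{r,i}(h(A,B))$. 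Taking the maximum over $r$ and noting that $\phi_i \ge \phi_{r,i}$ and is itself a comparison function (a finite maximum of increasing functions whose iterates tend to $0$ is again of this type), we conclude $h(F_i(A), F_i(B)) \le \phi_i(h(A,B))$, so each $F_i$ is a $\phi_i$-contraction on the hyperspace.

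Once this is in place, the hypothesis $\lim_{k \to \infty} \phi_1 \circ \phi_2 \circ \cdots \circ \phi_k(t) = 0$ is exactly the convergence assumption required by Proposition~3.6 for the sequence $\{F_i\}$ on $(H(X), h)$. Applying that proposition in the hyperspace yields $h(\Phi_k(A), \Phi_k(B)) \to 0$ and $h(\Psi_k(A), \Psi_k(B)) \to 0$ as $k \to \infty$ for every $A, B \in H(X)$, which is the claimed asymptotic similarity $\{\Phi_k(A)\} \sim \{\Phi_k(B)\}$ and $\{\Psi_k(A)\} \sim \{\Psi_k(B)\}$.

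I expect the main obstacle to be the transfer of the $\phi$-contraction condition from the individual maps $f_{r,i}$ on $X$ to the union map $F_i$ on the hyperspace. The estimate is elementary but requires the Hausdorff union inequality and the monotonicity of the comparison functions to be combined carefully so that $\max_r \phi_{r,i}$ genuinely serves as a single comparison function for the full set-valued map. After this lemma-style step, the rest is a direct invocation of Proposition~3.6, with the forward and backward cases handled identically.
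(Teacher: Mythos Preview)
Your proposal is correct and matches the paper's approach: the paper treats this corollary as an immediate consequence of Proposition~3.6 applied to the sequence $\{F_i\}$ on the complete metric space $(H(X),h)$, having already recorded (just before the corollary) that the comparison mapping of $F_i$ is $\phi_i=\max_{1\le r\le n_i}\phi_{r,i}$, and offers no further argument beyond a remark that the forward and backward cases are handled identically. Your explicit verification that each $F_i$ is a $\phi_i$-contraction on the hyperspace is exactly the ingredient the paper takes for granted (and partially justifies later in Lemma~4.5 for a single map), so your write-up is in fact more complete than the paper's own treatment.
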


\begin{remark}
For both backward and forward trajectories, the proof is the same.
\end{remark}
\begin{corollary}(Convergence of forward SFS trajectories)\\ Consider $\{F_{i}\}_{i\in N}$ as in the corollary 4.1, there are an equal number of mappings, $n_{i} =n $ and consider $F_{i} = \{X; f_{1}, f_{2},\dots,f_{n}\}, ~i \in N$. Suppose $\exists$   $C \subseteq X$, which is compact invariant domain for ${f_{r,i}}$ as well as for each $r = 1, 2,\dots,n$ and the $\{f_{r,i}\}_{i\in N}$ sequence
converges to $f_r$ uniformly over $C$ as i $\to$ $\infty$. Also suppose that $F$ is ${\phi}$-contraction mapping then the forward trajectories ${\Phi_{k}(A)}$ converge for taking any initial subset of $c$ i.e $P \subseteq C$ to a unique attractor of $F$.
\end{corollary}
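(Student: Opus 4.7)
The plan is to lift Proposition 3.10 (convergence of forward trajectories on $X$) to the hyperspace $(H(X),h)$ and apply it directly to the set-valued maps $F_i$. The three ingredients Proposition 3.10 requires are: a compact invariant domain in $H(X)$, a $\phi$-contraction limit $F$, and uniform convergence $F_i \to F$ on that domain. The first and third of these must be produced from the pointwise hypotheses on the individual maps $f_{r,i}$.

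First I would observe that since $C$ is compact in $X$, the hyperspace $H(C)$ (nonempty compact subsets of $C$) is a compact subset of $(H(X),h)$. Moreover, $H(C)$ is invariant under every $F_i$: for $A\in H(C)$, each $f_{r,i}(A)\subseteq C$ by invariance of $C$ under $f_{r,i}$, so $F_i(A)=\bigcup_{r=1}^{n}f_{r,i}(A)\subseteq C$. Thus $H(C)$ plays the role of the compact invariant domain required in Proposition 3.10, and the initial set $P\subseteq C$, i.e.\ $P\in H(C)$, is an admissible starting point.

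Next I would upgrade the pointwise uniform convergence $f_{r,i}\to f_r$ on $C$ to uniform convergence $F_i\to F$ on $H(C)$ in the Hausdorff metric. The key bound is the standard union inequality
\begin{equation*}
h\!\left(\bigcup_{r=1}^{n} A_r,\ \bigcup_{r=1}^{n} B_r\right)\ \le\ \max_{1\le r\le n} h(A_r,B_r),
\end{equation*}
applied with $A_r=f_{r,i}(A)$ and $B_r=f_r(A)$. Combined with $h(f_{r,i}(A),f_r(A))\le \sup_{x\in A}d(f_{r,i}(x),f_r(x))\le \sup_{x\in C}d(f_{r,i}(x),f_r(x))$, this yields
\begin{equation*}
\sup_{A\in H(C)} h(F_i(A),F(A))\ \le\ \max_{1\le r\le n}\ \sup_{x\in C} d(f_{r,i}(x),f_r(x))\ \longrightarrow\ 0.
\end{equation*}

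Finally, since $(H(X),h)$ is complete, each $F_i$ is a $\phi$-contraction on $H(X)$ with comparison function $\phi_i=\max_r \phi_{r,i}$, and $F$ is a $\phi$-contraction by hypothesis, the three assumptions of Proposition 3.10 are met in $(H(X),h)$. Applying that proposition to the sequence $\{F_i\}$ and any starting point $P\in H(C)$ gives $h(\Phi_k(P),p^\ast)\to 0$, where $p^\ast$ is the unique fixed point of $F$ in $H(X)$ — that is, the attractor of $F$. I expect the only real obstacle to be the clean derivation of the uniform Hausdorff-metric convergence $F_i\to F$ on $H(C)$ from the uniform convergences of the $f_{r,i}$; everything else is either an invariance bookkeeping step or a direct invocation of the already-established Proposition 3.10.
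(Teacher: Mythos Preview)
The paper states this result as a corollary without an explicit proof; the intended argument is precisely the one you give, namely applying Proposition~3.10 in the hyperspace $(H(X),h)$ with $H(C)$ as the compact invariant domain, $F$ as the $\phi$-contraction limit, and the uniform convergence $F_i\to F$ on $H(C)$ derived from the uniform convergences of the $f_{r,i}$ via the union estimate. Your proposal is correct and is in fact more detailed than anything the paper provides for this statement.
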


\begin{corollary}(Convergence of backward SFS trajectories)\\
Let an SFS   $F_{i} = \{X; f_{1}, f_{2},\dots,f_{n}\},$ $ i \in N$ defined as in corollary 4.1 and $d(F_i(A_1),A_1)$ is bounded for some $A_1 \subseteq H(X)$ assume that for any $t>0$ and  $$\sum_{k=1}^{\infty} \phi_{1}\circ\phi_{2}\circ\phi_{3}\dots\circ\phi_{k}(t) < \infty,$$ then the backward trajectories $\Psi_{k}(A)= F_{1}\circ F_{2}\circ \dots \circ F_{k}(A) ~,k \in N $ 
converge for any $P \subseteq C$ is unique set (attractor) in $C$.
\end{corollary}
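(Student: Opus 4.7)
The plan is to reduce this corollary to Proposition \ref{CBT} by lifting the entire problem to the hyperspace $(H(X), h)$, where the set-valued maps $F_i$ form a sequence of $\phi$-contractions to which the non-stationary backward convergence result already applies.

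First, I would recall that $(H(X), h)$ is a complete metric space whenever $(X, d)$ is complete. Next, as already noted in the preamble to Section 4, each $F_i : H(X) \to H(X)$ is itself a $\phi_i$-contraction with comparison function $\phi_i = \max_{1 \le r \le n} \phi_{r,i}$. The key estimate is the standard Hausdorff-distance bound
\[
h\bigl(F_i(A), F_i(B)\bigr) \le \max_{r} h\bigl(f_{r,i}(A), f_{r,i}(B)\bigr) \le \max_r \phi_{r,i}\bigl(h(A,B)\bigr) = \phi_i\bigl(h(A,B)\bigr),
\]
which only requires the monotonicity of each $\phi_{r,i}$. Checking that $\phi_i = \max_r \phi_{r,i}$ is itself a comparison function (monotone, with $\phi_i^p(t) \to 0$) is routine, since the maximum of finitely many comparison functions inherits both properties.

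With this setup, the assumption that $h(F_i(A_1), A_1)$ is bounded uniformly in $i$ for some $A_1 \in H(X)$ is exactly the ``$d(T_i(x_0), x_0) \le M$'' hypothesis of Proposition \ref{CBT}, and the summability hypothesis $\sum_{k=1}^\infty \phi_1 \circ \phi_2 \circ \cdots \circ \phi_k(t) < \infty$ transfers verbatim. Applying Proposition \ref{CBT} to the sequence $\{F_i\}$ on the complete metric space $(H(X), h)$ therefore yields that $\Psi_k(A_1) = F_1 \circ F_2 \circ \cdots \circ F_k(A_1)$ is a Cauchy sequence and hence converges to a unique limit $A^\star \in H(X)$. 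To upgrade from this distinguished starting set $A_1$ to an arbitrary $P \in H(X)$, I would appeal to Corollary 4.1: the summability assumption forces $\lim_k \phi_1 \circ \cdots \circ \phi_k(t) = 0$, so $\{\Psi_k(P)\} \sim \{\Psi_k(A_1)\}$ in $(H(X), h)$, and the two trajectories share the same limit $A^\star$, giving both convergence and uniqueness of the limit.

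The main obstacle is the hyperspace-level verification that $F_i$ is a $\phi_i$-contraction on $(H(X), h)$ with the \emph{right} comparison function, namely showing that $\max_r \phi_{r,i}$ is still a comparison function whose iterated compositions inherit the summability hypothesis. Once this is in hand, the corollary is essentially a direct transcription of Proposition \ref{CBT} to the hyperspace, with the asymptotic-similarity step from Corollary 4.1 supplying uniqueness of the attractor automatically.
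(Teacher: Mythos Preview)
Your proposal is correct and follows precisely the approach the paper intends: the corollary is stated without proof because it is obtained by applying Proposition \ref{CBT} to the sequence $\{F_i\}$ acting on the complete metric space $(H(X),h)$, using the fact (recorded in the preamble to Section 4) that each $F_i$ is a $\phi_i$-contraction with $\phi_i=\max_r\phi_{r,i}$. Your additional care in verifying that $\phi_i$ is again a comparison function and in invoking Corollary 4.1 for uniqueness simply fills in details the paper leaves implicit.
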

\begin{lemma}
Consider $(X, d)$ be a metric space, $\phi : [0,+\infty) \to [0,+\infty)$ be non-decreasing function and $f$ be a $\phi$-contractive. Then $h_d(F(A), F(B)) \leq \phi (h_d (A, B))$ for all $A, B \in H(X)$.
That is, $F : H(X) \to H(X)$ is also a $\phi$-contractive (where $\phi$ is comparison function), where 
$\forall D ~\in H(X), ~~~  F(D) = f(D)$.
\end{lemma}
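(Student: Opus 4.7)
The plan is to unpack the Hausdorff metric $h_d(A,B)=\max\{d(A,B),d(B,A)\}$ with one-sided distance $d(A,B)=\sup_{a\in A}\inf_{b\in B}d(a,b)$, and establish the two one-sided inequalities $d(F(A),F(B))\le \phi(h_d(A,B))$ and $d(F(B),F(A))\le \phi(h_d(A,B))$ separately; the lemma then follows by taking the max. Before doing so, I would note that $F$ genuinely maps $H(X)$ to itself: $\phi$-contractivity forces $f$ to be continuous (since $\phi$ is non-decreasing and $\phi(t)<t$ for $t>0$ when $\phi$ is a comparison function, so $\phi(t)\to 0$ as $t\to 0^+$), hence $f(A)$ is compact whenever $A$ is.

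For the one-sided bound, I would fix $y\in F(A)=f(A)$, write $y=f(a)$ for some $a\in A$, and estimate
\begin{equation*}
\inf_{z\in f(B)} d(y,z)\;=\;\inf_{b\in B} d(f(a),f(b))\;\le\;\inf_{b\in B}\phi(d(a,b)).
\end{equation*}
The crucial move is to push $\phi$ outside the infimum: because $B$ is compact and $b\mapsto d(a,b)$ is continuous, the infimum $d(a,B)=\inf_{b\in B}d(a,b)$ is attained at some $b^{\ast}\in B$; monotonicity of $\phi$ then gives $\inf_{b\in B}\phi(d(a,b))=\phi(d(a,b^{\ast}))=\phi(d(a,B))$. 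Applying monotonicity once more, $\phi(d(a,B))\le \phi(\sup_{a'\in A}d(a',B))=\phi(d(A,B))\le \phi(h_d(A,B))$.

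Taking the supremum over $y\in f(A)$ yields $d(F(A),F(B))\le \phi(h_d(A,B))$. The symmetric argument, swapping the roles of $A$ and $B$, produces $d(F(B),F(A))\le \phi(h_d(A,B))$. Taking the maximum of these two inequalities gives the desired conclusion $h_d(F(A),F(B))\le \phi(h_d(A,B))$.

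The main obstacle is the commutation of $\phi$ with the infimum in the display above. Without monotonicity, one only has the trivial lower bound $\phi(\inf_b d(a,b))\le \inf_b \phi(d(a,b))$, which goes the wrong way; the compactness of $B$ is what turns this into an equality by letting us replace the infimum with an attained minimum, after which monotonicity suffices. Everything else is bookkeeping with the definition of the Hausdorff metric.
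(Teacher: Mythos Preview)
Your proof is correct and follows essentially the same route as the paper's: attain the inner infimum via compactness, then use monotonicity of $\phi$ to bound each one-sided Hausdorff distance by $\phi(h_d(A,B))$ and take the maximum. Your write-up is slightly more streamlined, and you additionally verify that $F$ maps $H(X)$ to itself and correctly invoke compactness of $B$ (rather than of $A$, as the paper inadvertently writes) to attain $\inf_{b\in B}d(a,b)$.
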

\begin{proof}
The distance function $d(x, y)$ for fixed $x \in X$ is continuous in $y \in X$.
Let $A, B \in H(X)$ and set any $x_0 \in A$.
Using the compactness of $A$, there is $y_{x_0} \in B$ such that $$  \min_{y\in B} d(x_0, y) = d(x_0, y_{x_0}),$$ then we obtain $$ \inf_{y\in B}
\phi(d(x_0, y)) \leq \phi(d(x_0, y_{x_0})) = \phi (\min_{y\in B}d(x_0, y)),$$
because $\phi : [0,+\infty) \to [0,+\infty)$ is non-decreasing, it follows that
$$\phi(\min_{y\in B}d(x_0, y))\leq \phi(\max_{x\in A} \min_{y\in B}d(x, y)) \leq \phi(h_d (A, B)).$$
Since $x_0$ was arbitrary, $$\sup_{x\in A} \phi(\min_{y\in B} d(x, y)) \leq \phi(h_d(A, B)).$$
Hence, $$\sup_{x\in A}
\inf_{y\in B} \phi(d(x, y)) \leq \sup_{x\in A} \phi(\min_{y \in B} d(x, y)) \leq \phi(h_d(A, B)).$$
Similarly,$$\sup_{y\in B}
\inf_{x\in A} \phi(d(x, y)) \leq \phi(h_d(A, B)).$$
So  
    \begin{align*}
d(F(A),F(B)) & = \max_{f (x) \in F(A)}\min_{f(y)\in F(B)}d(f(x),f(y)) \\& = \max_{x\in A}\min_{y\in B}d(f(x),f(y)) \\ & \leq
\sup_{x\in A}
\inf_{y\in B} \phi(d(x, y)) \\ &\leq \phi(h_d(A, B)).
    \end{align*}
Similarly,
$$d(F(A),F(B))\leq \sup_{x\in A}
\inf_{y\in B} \phi(d(x, y))  \leq \phi(h_d(A, B)).$$
Hence, we have $$h_d(F(A),F(B)) = \max\{d(F(A),F(B)),d(F(B),F(A))\} \leq \phi(h_d(A,B)),$$ this completing the proof.
\end{proof}

In the next theorem, we show an application of the non-stationary fixed point results established previously to fractal interpolation theory. 
\begin{theorem}
Let $\{(x_i,y_i): i = 0, 1,\dots , N\}$ be a data set such that $x_0< x_1< \dots< x_N$ and $I:=[x_0,x_N]$. We consider $l_i: I \to [x_{i-1},x_i].$
Assume that maps $F_{i,k}$ are Matkowski contractions (with the same function $\phi$) with respect to the second variable, that is for some non-decreasing function $\phi : \mathbb{R_+} \to \mathbb{R_+}$ with $\phi^n(t) \to 0$ for $t > 0$ each map $F_i$ satisfies $$|F_{i,k}(x, y) - F_{i,k}(x, y')| \leq \phi(|y - y'|) ~~~\forall x\in I, ~~\forall y,y'\in [a,b]$$
Then the operator $T_k: C^*(I) \to C^*(I))$ defined by $$ (T_kf)(x)=F_{i,k}(l_i^{-1}(x), f(l_i^{-1}(x)), $$ is a Matkowski contraction.
Then there is a unique continuous function $f^*:I\to[a, b]$ which is the limit of the backword trajectories $\Psi_{k}(g)= T_{1}\circ T_{2}\circ \dots \circ T_{k}(g) $ for any $g \in  C^*(I)$ satisfying $f^*(x_i) = y_i$ for $i = 0, 1,\dots , N$.
\end{theorem}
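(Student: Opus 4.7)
The plan is to work in the complete metric subspace $C^*(I) \subset (C(I), d_\infty)$ of continuous functions $f: I \to [a,b]$ satisfying the interpolation conditions $f(x_i) = y_i$, and then to prove three things in sequence: that $T_k$ is a self-map of $C^*(I)$, that $T_k$ is a Matkowski contraction with the common function $\phi$, and that Proposition \ref{CBT} can be applied to deliver convergence of the backward trajectories.

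First I would verify well-definedness. Continuity of $T_k f$ on each closed subinterval $[x_{i-1}, x_i]$ follows from continuity of $F_{i,k}$ together with continuity of $l_i^{-1}$. Continuity across the interior nodes $x_i$ and the interpolation equalities $T_k f(x_i) = y_i$ require the standard fractal-interpolation endpoint conditions $F_{i,k}(x_0, y_0) = y_{i-1}$ and $F_{i,k}(x_N, y_N) = y_i$, which are implicit in the setup and force $(T_k f)(x_{i-1}^+) = y_{i-1}$ and $(T_k f)(x_i^-) = y_i$ for each $i$.

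Next, the main contraction estimate. For $f, g \in C^*(I)$ and any $x$ in the branch $[x_{i-1}, x_i]$, set $u = l_i^{-1}(x)$; then
\begin{align*}
|(T_k f)(x) - (T_k g)(x)| &= \bigl|F_{i,k}(u, f(u)) - F_{i,k}(u, g(u))\bigr| \\
&\leq \phi\bigl(|f(u) - g(u)|\bigr) \leq \phi\bigl(d_\infty(f, g)\bigr),
\end{align*}
where the first inequality is the hypothesis on $F_{i,k}$ in its second argument and the second uses monotonicity of $\phi$. Taking the supremum over $x \in I$ gives $d_\infty(T_k f, T_k g) \leq \phi(d_\infty(f, g))$, so each $T_k$ is a Matkowski $\phi$-contraction on the complete space $C^*(I)$ with one and the same comparison function.

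Finally I would invoke Proposition \ref{CBT} on $(C^*(I), d_\infty)$: choose any anchor $g_0 \in C^*(I)$ (for instance the piecewise linear interpolant through the data), observe that $d_\infty(T_k g_0, g_0) \leq b - a$ uniformly in $k$ because all iterates take values in $[a, b]$, and note that the comparison functions coincide. Backward trajectory convergence then yields a unique limit $f^* \in C^*(I)$ of $\Psi_k(g)$ for every $g \in C^*(I)$, and the interpolation property $f^*(x_i) = y_i$ is automatic since $C^*(I)$ is closed in the uniform metric. The main obstacle I anticipate is verifying the summability hypothesis $\sum_{k=1}^{\infty} \phi_1 \circ \cdots \circ \phi_k(t) < \infty$ of Proposition \ref{CBT}, which here collapses to $\sum_{k=1}^\infty \phi^k(M) < \infty$ and need not hold for a general Matkowski $\phi$ (e.g.\ $\phi(t) = t/(1+t)$ gives $\phi^k(M) \sim 1/k$). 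I would therefore either strengthen the hypothesis on $\phi$ to one guaranteeing summability (as in the Rakotch case), or bypass Proposition \ref{CBT} with a direct Matkowski-style Cauchy argument for the non-stationary composition, exploiting the bound $d_\infty(\Psi_{k+1}(g), \Psi_k(g)) \leq \phi^k\bigl(d_\infty(T_{k+1} g, g)\bigr)$ together with the absorption trick used in the classical single-map Matkowski proof.
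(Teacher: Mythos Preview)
Your approach mirrors the paper's: derive the pointwise estimate $|(T_kf)(x)-(T_kg)(x)|\le \phi(|f(u)-g(u)|)$ from the hypothesis on $F_{i,k}$, pass to the supremum using monotonicity of $\phi$ to get $d_\infty(T_kf,T_kg)\le\phi(d_\infty(f,g))$, and then invoke Proposition~\ref{CBT}. The paper's argument is in fact less careful than yours---it omits the well-definedness discussion and the boundedness check on $d_\infty(T_k g_0,g_0)$, and simply asserts that ``all hypotheses of Proposition~\ref{CBT} are satisfied'' without verifying the summability condition $\sum_{k}\phi^k(t)<\infty$. Your concern here is entirely legitimate: for a general Matkowski $\phi$ such as $\phi(t)=t/(1+t)$ one has $\phi^k(t)=t/(1+kt)\sim 1/k$ and the series diverges, so Proposition~\ref{CBT} as stated does not apply. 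The paper does not address this gap, so your proposed remedies (either strengthening the hypothesis on $\phi$ to guarantee summability, or replacing the appeal to Proposition~\ref{CBT} by a direct Matkowski-type Cauchy argument) actually go beyond what the paper supplies.
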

\begin{proof}
 For all $g,h \in C^*(I),$ we have
    \begin{align*}
    d_{C(I)}(T_kg,T_kh)& =\max_{x\in[x_0,x_N]}|T_kg(x)-T_kh(x)|\\&
    =\max_{i=1,2,...,N}\max_{x\in[x_{i-1},x_i]}|T_k g(x) - T_k h(x)|\\&
    = \max_{i=1,2,...,N}\max_{x\in[x_{i-1},x_i]}
    |F_{i,k}(l_i^{-1}(x), g(l_i^{-1})(x))-F_{i,k}(l_i^{-1}(x), h(l_i^{-1})(x))| \\&
    \leq \max_{i=1,2,...,N}\sup_{x\in[x_{i-1},x_i]} \phi(|g({l_i}^{-1}(x)-h({l_i}^{-1}(x))|).
    \end{align*}
 
    Since $\phi : (0,+\infty)\to (0,+\infty)$ is non-decreasing function and ${l_i}^{-1}:[x_{i-1}, x_{i}] \to [x_0,x_N]$ for all $i = 1, 2, . . . , N$, we obtain that for $i_0 \in {1, 2, . . . , N}$ and $x_* \in [x_{i_0-1},x_{i_0}]$,
    \begin{align*}
        \phi(|g({l_{i_0}}^{-1}(x_*)-h({l_{i_0}}^{-1}(x_*))|)&\leq \phi( \max_{x\in[x_{i_0-1},x_{i_0}]}|g({l_{i_0}}^{-1}(x)-h({l_{i_0}}^{-1}(x))|)\\&
    \leq \phi(\max_{x\in[x_0,x_N]}|g(x)-h(x)|\\&
    =\phi(d_{C(I)}(g,h)).
    \end{align*}
Since $x_*$ was arbitrary, therefore
$$\sup_{x\in [x_{i_0-1},x_{i_0}]} \phi(|g({l_{i_0}}^{-1}(x))-h({l_{i_0}}^{-1}(x))|)\leq
\phi(d_{C(I)}(g,h)),$$
and since $i_0$ was arbitrary
$$\max_{i=1,2,...,N}\sup_{x\in[x_{i−1},x_i]}\phi(|g({l_i}^{-1}(x)-h({l_i}^{-1}(x))|)\leq\phi(d_{C(I)}(g,h)).$$
Hence, we obtain
  \begin{align*}
      d_{C(I)}(T_kg,T_kh) &\leq
\max_{i=1,2,...,N}\sup_{x\in[x_{i-1},x_i]} \phi(|g({l_i}^{-1}(x)-h({l_i}^{-1}(x))|)\\&\leq\phi(d_{C(I)}(g,h)).
  \end{align*}  
So we conclude that $T_k$ is a Matkowski $\phi$-contractive map on the complete metric space $(C^*(I),d_{C(I)})$. It is easy to check that all hypotheses of Proposition \ref{CBT} are satisfied. Now, by using Proposition \ref{CBT}, we have a unique function $f^*:I\to[a, b]$ satisfying $f^*(x_i) = y_i$ for $i = 0, 1, . . . , N$, which is the limit of the backward trajectories $\Psi_{k}(g)= T_{1}\circ T_{2}\circ \dots \circ T_{k}(g) $ for any $g \in  C^*(I).$ This completes the proof.
\end{proof}
\begin{remark}
The above result generalizes several results available in the fractal interpolation theory, see, for instance, \cite{MB, SR1,SR2,sverma5}.
\end{remark}

\section{ Countable Iterated Function System} 
A sequence of continuous maps $(f_n)_{n\geq 1}$ on a topological space
$X$ into itself is called a Countable Iterated Function System (CIFS) on X. The associated set function $F : P(X)\to P(X)$ is defined by
$$ F(A)=\overline{{\bigcup_{i=1}^{\infty}f_{i}(A)}} ,~~~\forall A \in H(X).$$
A set $A \in P(X)$ for which $F(A) = A$, is named set fixed point (s.f.p.) of the CIFS. The set $A$ is called attractor of the CIFS whenever $A \in K(X)$  ( where $K(X)$ is the class of all compact sets of $P(X)$) and it is the unique s.f.p. for the respective CIFS.

\begin{theorem}
Let $(X, d)$ be a metric space, $\phi : [0,+\infty) \to [0,+\infty)$ be non-decreasing function and $f$ be a $\phi$-contractive. Then $h_d(F(A), F(B)) \leq \phi (h_d (A, B))$ for all $A, B \in H(X)$.
That is, $F: H(X) \to H(X)$ is also a $\phi$-contractive (where $\phi$ is comparison function), where
$$ F(A)=\overline{{\bigcup_{i=1}^{\infty}f_{i}(A)}} ,~~~~~\forall A \in H(X).$$
\end{theorem}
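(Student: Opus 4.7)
The plan is to mirror the proof of Lemma 4.5 but now account for (i) the countable family $(f_i)_{i \geq 1}$ and (ii) the closure appearing in the definition of $F$. The statement should be read with the convention that each $f_i$ in the CIFS is $\phi$-contractive with the same comparison function $\phi$, so Lemma 4.5 applied pointwise gives, for every $i \in \mathbb{N}$ and every $A, B \in H(X)$,
\[
h_d(f_i(A), f_i(B)) \le \phi(h_d(A,B)).
\]
This is the single-map ingredient; everything else is a set-theoretic upgrade.

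Next I would pass from one $f_i$ to the union. For a fixed $x \in f_i(A)$ and any $j$,
$\inf_{y \in \bigcup_k f_k(B)} d(x,y) \le \inf_{y \in f_i(B)} d(x,y)$, so
\[
\sup_{x \in \bigcup_i f_i(A)} \inf_{y \in \bigcup_k f_k(B)} d(x,y) \le \sup_i \sup_{x \in f_i(A)} \inf_{y \in f_i(B)} d(x,y) \le \sup_i h_d(f_i(A), f_i(B)).
\]
Combining with the symmetric inequality yields $h_d\bigl(\bigcup_i f_i(A), \bigcup_i f_i(B)\bigr) \le \sup_i h_d(f_i(A), f_i(B))$. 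Using the pointwise bound from Lemma 4.5 and monotonicity of $\phi$, the right-hand side is bounded by $\phi(h_d(A,B))$.

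Finally I would pass to closures. The standard fact is that for any subset $S \subseteq X$ and any $y \in X$, $\inf_{x \in S} d(x,y) = \inf_{x \in \overline S} d(x,y)$ (the distance function is continuous), and likewise $\sup_{x \in S} \varphi(x) = \sup_{x \in \overline S} \varphi(x)$ when $\varphi$ is continuous. Applying this to the sup-inf definition of Hausdorff distance gives $h_d(\overline U, \overline V) = h_d(U, V)$ for any nonempty $U, V$, so taking closures on both sides in the previous step preserves the estimate, yielding
\[
h_d(F(A), F(B)) \le \phi(h_d(A,B)).
\]

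The main obstacle I expect is not the contraction inequality itself but the tacit well-definedness claim that $F(A) \in H(X)$ whenever $A \in H(X)$: a countable union of compact images need not have compact closure without an additional hypothesis (uniform contractivity toward a common bounded region, or an \emph{a priori} compact invariant set). I would flag this: either the theorem needs the standing assumption (already in force earlier in the paper) that the $f_i$ share a common compact invariant set, or one should restrict to those $A$ for which the closure in the definition of $F(A)$ is compact. Modulo this, the chain above is the entire proof.
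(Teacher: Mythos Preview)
Your argument is correct and is essentially the paper's proof reorganised more cleanly: the paper reproduces the Lemma~4.5 computation verbatim to obtain $\sup_{x\in A}\inf_{y\in B}\phi(d(x,y))\le\phi(h_d(A,B))$ and then bounds $d(F(A),F(B))\le\sup_i d(f_i(A),f_i(B))$ directly, whereas you invoke Lemma~4.5 as a black box on each $f_i$ and then pass to the countable union and closure via the standard identities $h_d\bigl(\bigcup_i A_i,\bigcup_i B_i\bigr)\le\sup_i h_d(A_i,B_i)$ and $h_d(\overline U,\overline V)=h_d(U,V)$. The content is the same; your version simply avoids recopying the lemma. Your flag that $F(A)\in H(X)$ is not automatic for a countable family is a legitimate point that the paper does not address.
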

\begin{proof}
The distance function $d(x, y)$ for fixed $x \in X$ is continuous in $y \in X$.\\
Let $A, B \in H(X)$ and set any $x_0 \in A$.
Using the compactness of $A$, there is $y_{x_0} \in B$ such that $$  \min_{y\in B} d(x_0, y) = d(x_0, y_{x_0}),$$ then we obtain $$ \inf_{y\in B}
\phi(d(x_0, y)) \leq \phi(d(x_0, y_{x_0})) = \phi (\min_{y\in B}d(x_0, y)),$$
because $\phi : [0,+\infty) \to [0,+\infty)$ is non-decreasing, it follows that
$$\phi(\min_{y\in B}d(x_0, y))\leq \phi(\max_{x\in A} \min_{y\in B}d(x, y)) \leq \phi(h_d (A, B)).$$
Since $x_0$ was arbitrary, $$\sup_{x\in A} \phi(\min_{y\in B} d(x, y)) \leq \phi(h_d(A, B)).$$
Hence, $$\sup_{x\in A}
\inf_{y\in B} \phi(d(x, y)) \leq \sup_{x\in A} \phi(\min_{y \in B} d(x, y)) \leq \phi(h_d(A, B)).$$
Similarly,$$\sup_{y\in B}
\inf_{x\in A} \phi(d(x, y)) \leq \phi(h_d(A, B)).$$
So  
    \begin{align*}
d(F(A),F(B)) & =d\Big(\overline{\bigcup_{i=1}^{\infty}f_{i}(A)},\overline{\bigcup_{i=1}^{\infty}f_{i}(B)}\Big)\\& \leq \sup_{i\in N} d(f_{i}(A),f_{i}(B))\\& = \sup_{i\in N} \{\max_{f_i (x) \in f_i(A)}\min_{f_i(y)\in f_i(B)}d(f_i(x),f_i(y))\} \\& = \sup_{i\in N}\{\max_{x\in A}\min_{y\in B}d(f_i(x),f_i(y))\} \\ & \leq {\sup_{x\in A}
\inf_{y\in B} \phi(d(x, y))} \\ &\leq \phi(h_d(A, B)).
    \end{align*}
Similarly,
$$d(F(A),F(B))\leq \sup_{x\in A}
\inf_{y\in B} \phi(d(x, y))  \leq \phi(h_d(A, B)).$$
Hence, we get $$h_d(F(A),F(B)) = \max\{d(F(A),F(B)),d(F(B),F(A))\} \leq \phi(h_d(A,B)),$$ establishing the claim.
\end{proof}
\section{Acknowledgements}
 The work of first author  is financially supported by the
CSIR, India with grant no:
 09/1028(0019)/2020-EMR-I.

\bibliographystyle{amsplain}

\end{document}